\documentclass[12pt]{article}
\setlength{\textwidth}{6.5in}
\setlength{\oddsidemargin}{.1in}
\setlength{\evensidemargin}{.1in}
\setlength{\topmargin}{-.5in}
\setlength{\textheight}{8.9in}

\usepackage{lineno,hyperref}
\modulolinenumbers[5]
\usepackage{color}

\usepackage{graphics,amsmath,amssymb}
\usepackage{amsthm}
\usepackage{amsfonts}
\usepackage{latexsym}
\usepackage{epsf}

\newcommand{\stirlings}[2]{\genfrac\{\}{0pt}{}{#1}{#2}}
\newcommand{\stirlingf}[2]{\genfrac[]{0pt}{}{#1}{#2}}

\theoremstyle{plain}
\newtheorem{theorem}{Theorem}
\newtheorem{corollary}[theorem]{Corollary}

\theoremstyle{definition}

\theoremstyle{remark}

\begin{document}

\begin{center}
\vskip 1cm{\LARGE\bf The Translated Whitney-Lah Numbers: Generalizations and $q$-Analogues}
\vskip 1cm
\large    
Mahid M. Mangontarum\\
Department of Mathematics\\
Mindanao State University-Main Campus\\
Marawi City 9700\\
Philippines \\
\href{mailto:mmangontarum@yahoo.com}{\tt mmangontarum@yahoo.com} \\
\href{mailto:mangontarum.mahid@msumain.edu.ph}{\tt mangontarum.mahid@msumain.edu.ph}\end{center}

\vskip .2 in

\begin{abstract}
In this paper, we derive formulas for the translated Whitney-Lah numbers and show that they are generalizations of already-existing identities of the classical Lah numbers. $q$-analogues of the said formulas are also obtained for the case of the translated $q$-Whitney numbers.
\end{abstract}

\section{Introduction}

The (unsigned) Lah numbers, denoted by $L(n,k)$, count the number of partitions of a set $X$ with $n$ elements into $k$ nonempty linearly ordered subsets. These numbers are known to satisfy the following basic combinatorial properties:
\begin{itemize}
\item explicit formula
\begin{equation}
L(n,k)=\frac{n!}{k!}\binom{n-1}{k-1};\label{lah1}
\end{equation}
\item recurrence relation
\begin{equation}
L(n+1,k)=L(n,k-1)+(n+k)L(n,k);\label{lah2}
\end{equation}
\item exponential generating function
\begin{equation}
\sum_{n=0}^{\infty}L(n,k)\frac{t^n}{n!}=\frac{1}{k!}\left(\frac{t}{1-t}\right)^k.\label{lah3}
\end{equation}
\end{itemize}
The numbers $L(n,k)$ are also known to be coefficients of rising factorials in terms of falling factorials. That is,
\begin{equation}
\left\langle t\right\rangle_n=\sum_{k=0}^nL(n,k)(t)_k,\label{LahDef}
\end{equation}
where $$\left\langle t\right\rangle_n=t(t+1)(t+2)\cdots(t+n-1)$$
is the rising factorial of $t$ of order $n$ and $$(t)_k=t(t-1)(t-2)\cdots(t-k+1)$$
is the falling factorial of $t$ of order $k$ with $\left\langle t\right\rangle_0=(t)_0=1$ and $(-t)_n=(-1)^n\left\langle t\right\rangle_n$. The Lah numbers $L(n,k)$ are actually closely related with the well-known Stirling numbers. To illustrate, we first recall that the Stirling numbers of the first and second kinds, denoted by $\stirlingf{n}{k}$ and $\stirlings{n}{k}$, respectively, are defined as coefficients in the expansions of the relations
\begin{equation}
(t)_n=\sum_{k=0}^n(-1)^{n-k}\stirlingf{n}{k}t^k\label{sn1}
\end{equation}
and 
\begin{equation}
t^n=\sum_{k=0}^n\stirlings{n}{k}(t)_k.\label{sn2}
\end{equation}
Notice that putting $-t$ in place of $t$ in \eqref{sn1} yields
\begin{equation}
\left\langle t\right\rangle_n=\sum_{k=0}^n\stirlingf{n}{k}t^k.\label{sn1.1}
\end{equation}
By substituting \eqref{sn2} in the right-hand side of \eqref{sn1.1},
\begin{eqnarray*}
\left\langle t\right\rangle_n&=&\sum_{k=0}^n\stirlingf{n}{k}\sum_{j=0}^k\stirlings{k}{j}(t)_j\\
&=&\sum_{j=0}^n\left(\sum_{k=j}^n\stirlingf{n}{k}\stirlings{k}{j}\right)(t)_j.
\end{eqnarray*}
Combining this with \eqref{LahDef} and by comparing the coefficients of $(t)_j$, we are able to write
\begin{equation}
L(n,k)=\sum_{j=k}^n\stirlingf{n}{j}\stirlings{j}{k}.\label{lahsS}
\end{equation}
It is important to note that here, the numbers $\stirlingf{n}{k}$ particularly refer to the ``unsigned'' Stirling numbers of the first kind which count the number of permutations of the $n$-element set $X$ into $k$ disjoint cycles. Similarly, the Stirling numbers of the second kind $\stirlings{n}{k}$ can be combinatorially interpreted as the number of partitions of $X$ into $k$ nonempty blocks. With this, the Bell numbers $B_n$ are defined as the total number of partitions of the $n$-element set $X$. That is,
\begin{equation}
B_n=\sum_{k=0}^n\stirlings{n}{k}.
\end{equation}
The paper of Petkov\v{s}ek and Pisanski \cite{Pet}, and the books of Comtet \cite{Comt} and Chen and Kho \cite{Chen} contain detailed discussions on the Lah, Stirling and Bell numbers especially their respective combinatorial properties and interpretations. In addition to these, Qi \cite{Qi} recently obtained an explicit formula for the Bell numbers expressed in terms of both the Lah numbers and the Stirling numbers of the second kind, viz.
\begin{equation}
B_n=\sum_{k=1}^n(-1)^{n-k}\left(\sum_{\ell=1}^kL(k,\ell)\right)\stirlings{n}{k}.\label{QiF}
\end{equation}

The results of this paper are organized as follows. In Section \ref{sec1}, we present the translated Whitney numbers and derive some formulas which generalize already-existing identities for the classical Lah numbers, including one that will generalize \eqref{QiF}. In Section \ref{sec2}, we establish the $q$-analogues of some of the results in Section \ref{sec1} using as framework the translated $q$-Whitney numbers.

\section{Translated Whitney numbers}\label{sec1}

In 2013, Belbachir and Bousbaa \cite{Bel} introduced the translated Whitney numbers using a combinatorial approach which involves ``mutations'' of some elements. To be more precise, the translated Whitney numbers of first kind, denoted by $\widetilde{w}_{(\alpha)}(n,k)$, were defined as the number of permutations of $n$ elements with $k$ cycles such that the elements of each cycle can mutate in $\alpha$ ways, except the dominant one while the translated Whitney numbers of the second kind, denoted by $\widetilde{W}_{(\alpha)}(n,k)$, were defined as the number of partitions of the an $n$-element set into $k$ subsets such that the elements of each subset can mutate in $\alpha$ ways, except the dominant one. These numbers were shown to satisfy the recurrence relations \cite[Theorems 2 and 8]{Bel}
\begin{equation}
\widetilde{w}_{(\alpha)}(n,k)=\widetilde{w}_{(\alpha)}(n-1,k-1)+\alpha(n-1)\widetilde{w}_{(\alpha)}(n-1,k)
\end{equation}
and 
\begin{equation}
\widetilde{W}_{(\alpha)}(n,k)=\widetilde{W}_{(\alpha)}(n-1,k-1)+\alpha k\widetilde{W}_{(\alpha)}(n-1,k),
\end{equation}
and the horizontal generating functions \cite[Theorems 4 and 10]{Bel}
\begin{equation}
(t|-\alpha)_n=\sum_{k=0}^n\widetilde{w}_{(\alpha)}(n,k)x^k\label{wHGF1}
\end{equation}
and
\begin{equation}
x^n=\sum_{k=0}^n\widetilde{W}_{(\alpha)}(n,k)(t|\alpha)_k.\label{wHGF2}
\end{equation}
Here, we used $(t|\alpha)_n$ to denote the generalized factorial of $t$ of increment $\alpha$ defined by
\begin{equation*}
(t|\alpha)_n=\prod_{i=0}^{n-1}(t-i\alpha),\ (t|\alpha)_0=1.
\end{equation*}
In the same paper, Belbachir and Bousbaa \cite{Bel} also defined translated Whitney-Lah numbers, denoted by $\widehat{w}_{(\alpha)}(n,k)$, as the number of ways to distribute the set $\{1,2,\ldots,n\}$ into $k$ ordered lists such that the elements of each list can mutate with $\alpha$ ways, except the dominant one. The values of the numbers $\widehat{w}_{(\alpha)}(n,k)$ can be computed using the recurrence relation \cite[Theorem 13]{Bel}
\begin{equation}
\widehat{w}_{(\alpha)}(n,k)=\widehat{w}_{(\alpha)}(n-1,k-1)+\alpha(n+k-1)\widehat{w}_{(\alpha)}(n-1,k)\label{recwl}
\end{equation}
and are generated using \cite[Corollary 15]{Bel}
\begin{equation}
(t|-\alpha)_n=\sum_{k=0}^n\widehat{w}_{(\alpha)}(n,k)(t|\alpha)_k.\label{twlHGF}
\end{equation}
Similar to what is seen in \eqref{lahsS}, the translated Whitney-Lah numbers may be expressed as sum of products of $\widetilde{w}_{(\alpha)}(n,k)$ and $\widetilde{W}_{(\alpha)}(n,k)$ as follows \cite[Corollary 14]{Bel}
\begin{equation}
\widehat{w}_{(\alpha)}(n,j)=\sum_{k=j}^n\widetilde{w}_{(\alpha)}(n,k)\widetilde{W}_{(\alpha)}(k,j).\label{wlahwW}
\end{equation}
It appears that the translated Whitney and Whitney-Lah numbers are generalizations of the Stirling and Lah numbers, respectively. This may be verified by simply setting $\alpha=1$ in the defining relations of the former.

Recently, Mansour et al. \cite{Mansour} defined the recurrence relation
\begin{equation}
u(n,k)=u(n-1,k-1)+(a_{n-1}+b_k)u(n-1,k)
\end{equation}
for two sequences $(a_i)_{i\geq 0}$ and $(b_i)_{i\geq 0}$ with boundary conditions given by $u(n,0)=\prod_{i=0}^{n-1}(a_i+b_0)$ and $u(0,k)=\delta_{0,k}$, where
\begin{equation*}
\delta_{i,j}=\left\{              
	\begin{array}{cll}                   
0,\ if \ i\neq j \\   
1,\ if \ i=j                         
	\end{array}\right.
\end{equation*}
is the kronecker delta. Notice that if $a_{n-1}=\alpha(n-1)$ and $b_k=\alpha k$, the above recurrence relation becomes \eqref{recwl}. Hence, for $a_i=\alpha i$ and $b_j=\alpha j$, the formula \cite[Theorem 1.1]{Mansour}
\begin{equation}
u(n,k)=\sum_{j=0}^k\left(\frac{\prod_{i=0}^{n-1}(b_j+a_i)}{\prod_{i=0,i\neq j}^{n-1}(b_j-b_i)}\right)
\end{equation}
can be utilized to obtain an explicit formula for $\widehat{w}_{(\alpha)}(n,k)$ given in the next theorem.
\begin{theorem}
The translated Whitney-Lah numbers satisfy the following explicit formula:
\begin{equation}
\widehat{w}_{(\alpha)}(n,k)=\frac{\alpha^{n-k}}{k!}\sum_{j=0}^k(-1)^{k-j}\binom{k}{j}\left\langle j\right\rangle_n.\label{r1}
\end{equation}
\end{theorem}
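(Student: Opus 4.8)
The plan is to apply the explicit formula of Mansour et al.\ directly to the specialization $a_i=\alpha i$ and $b_j=\alpha j$, which—as already observed in the excerpt—turns the general recurrence into the Whitney--Lah recurrence~\eqref{recwl}. Before invoking it I would confirm the boundary conditions also match: for $b_0=0$ the product $\prod_{i=0}^{n-1}(a_i+b_0)=\alpha^{n}\prod_{i=0}^{n-1}i$ vanishes for $n\ge 1$ and equals $1$ for $n=0$, in agreement with $\widehat{w}_{(\alpha)}(n,0)=\delta_{n,0}$, while $u(0,k)=\delta_{0,k}$ follows from the binomial identity $\sum_{j=0}^{k}(-1)^{k-j}\binom{k}{j}=0$ for $k\ge 1$. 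With the initial data in place the formula applies, and it remains only to simplify the two products appearing in each summand, where the denominator is understood as the partial-fraction product over the interpolation nodes $b_0,\dots,b_k$, i.e.\ over $0\le i\le k$ with $i\neq j$.

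First I would evaluate the numerator. Using $b_j+a_i=\alpha(i+j)$ one obtains
\begin{equation*}
\prod_{i=0}^{n-1}(b_j+a_i)=\alpha^{n}\prod_{i=0}^{n-1}(i+j)=\alpha^{n}\,j(j+1)\cdots(j+n-1)=\alpha^{n}\left\langle j\right\rangle_n,
\end{equation*}
which is exactly the rising factorial appearing in~\eqref{r1}.

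Next I would evaluate the denominator. Writing $b_j-b_i=\alpha(j-i)$ gives $\alpha^{k}\prod_{i=0,\,i\neq j}^{k}(j-i)$, and the key step is to split this product at $i=j$: the factors with $i<j$ contribute $j(j-1)\cdots 1=j!$, while the factors with $i>j$ contribute $(-1)(-2)\cdots(-(k-j))=(-1)^{k-j}(k-j)!$. Hence the denominator equals $\alpha^{k}(-1)^{k-j}\,j!\,(k-j)!$. I expect this splitting of the sign-carrying product into its $i<j$ and $i>j$ parts to be the one genuinely delicate point; everything else is bookkeeping.

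Finally, combining the two evaluations presents each summand as $\alpha^{n-k}\left\langle j\right\rangle_n/\bigl((-1)^{k-j}j!(k-j)!\bigr)$. Using $1/(-1)^{k-j}=(-1)^{k-j}$ and factoring out $k!$ to form the binomial coefficient $\binom{k}{j}=k!/\bigl(j!(k-j)!\bigr)$ then yields
\begin{equation*}
\widehat{w}_{(\alpha)}(n,k)=\frac{\alpha^{n-k}}{k!}\sum_{j=0}^{k}(-1)^{k-j}\binom{k}{j}\left\langle j\right\rangle_n,
\end{equation*}
which is precisely~\eqref{r1}.
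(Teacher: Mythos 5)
Your proposal is correct and follows exactly the route the paper intends: specializing the Mansour--Mulay--Shattuck formula to $a_i=\alpha i$, $b_j=\alpha j$ and simplifying the two products, which the paper merely asserts ``can be utilized'' without carrying out the computation. You also rightly read the denominator product as running over the nodes $b_0,\dots,b_k$ (the upper limit $n-1$ in the paper's displayed version of that formula is a typo), and your sign bookkeeping $\prod_{i\neq j}(j-i)=(-1)^{k-j}j!\,(k-j)!$ is exactly what is needed.
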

This theorem enables us to write the numbers $\widehat{w}_{(\alpha)}(n,k)$ in a closed form similar to \eqref{lah1}. It is implied in the proof of the succeeding corollary.
\begin{corollary}
The translated Whitney-Lah numbers satisfy the following relation:
\begin{equation}
\widehat{w}_{(\alpha)}(n,k)=\alpha^{n-k}L(n,k).\label{r2}
\end{equation}
\end{corollary}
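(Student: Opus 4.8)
The plan is to start from the explicit formula \eqref{r1} and show that the alternating binomial sum appearing there collapses to exactly $k!\,L(n,k)$; dividing by $k!$ then yields \eqref{r2} at once. Keeping the factor $\alpha^{n-k}/k!$ out front, it suffices to establish
$$\sum_{j=0}^k(-1)^{k-j}\binom{k}{j}\langle j\rangle_n = k!\,L(n,k).$$

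First I would replace each rising factorial $\langle j\rangle_n$ by its expansion in falling factorials supplied by the defining relation \eqref{LahDef}, namely $\langle j\rangle_n=\sum_{\ell=0}^n L(n,\ell)(j)_\ell$, and then interchange the two finite summations. This reduces everything to evaluating, for each fixed $\ell$, the inner sum $\sum_{j=0}^k(-1)^{k-j}\binom{k}{j}(j)_\ell$, which is a $k$-th finite difference at $0$ of the polynomial $(t)_\ell$.

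The key step---and the main obstacle---is recognizing that this inner sum equals $k!\,\delta_{k,\ell}$. Concretely, I would use the subset-of-a-subset identity $\binom{k}{j}(j)_\ell=\ell!\binom{k}{\ell}\binom{k-\ell}{j-\ell}$ together with the binomial theorem $\sum_{m}(-1)^{k-\ell-m}\binom{k-\ell}{m}=(1-1)^{k-\ell}$, which vanishes unless $k=\ell$; the terms with $\ell>k$ drop out automatically since $(j)_\ell=0$ for every integer $0\le j\le k<\ell$. With this orthogonality in hand, only the $\ell=k$ term of the outer sum survives, producing $k!\,L(n,k)$ exactly as claimed.

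Substituting back into \eqref{r1} then proves \eqref{r2}. As a by-product, combining \eqref{r2} with the classical closed form \eqref{lah1} gives $\widehat{w}_{(\alpha)}(n,k)=\alpha^{n-k}\frac{n!}{k!}\binom{n-1}{k-1}$, which is the promised analogue of \eqref{lah1} referred to in the remark preceding the corollary.
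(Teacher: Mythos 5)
Your proof is correct, but it takes a genuinely different route from the paper's. The paper writes $\langle j\rangle_n=(j+n-1)_n=n!\binom{j+n-1}{n}$ and collapses the alternating sum in \eqref{r1} by invoking the external binomial identity \eqref{graham} (Concrete Mathematics, 5.24), which produces the closed form $\widehat{w}_{(\alpha)}(n,k)=\alpha^{n-k}\frac{n!}{k!}\binom{n-1}{n-k}$ in \eqref{r2.1} and then matches it against \eqref{lah1}. You instead expand $\langle j\rangle_n$ by the defining relation \eqref{LahDef} and use the finite-difference orthogonality $\sum_{j=0}^k(-1)^{k-j}\binom{k}{j}(j)_\ell=k!\,\delta_{k,\ell}$, which you justify correctly via $\binom{k}{j}(j)_\ell=\ell!\binom{k}{\ell}\binom{k-\ell}{j-\ell}$ and $(1-1)^{k-\ell}$ (and the terms with $\ell>k$ do vanish since $(j)_\ell=0$ for integers $0\le j<\ell$). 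Your argument is more self-contained --- it needs neither \eqref{graham} nor the closed form \eqref{lah1}, and it identifies $\widehat{w}_{(\alpha)}(n,k)$ with $L(n,k)$ directly at the level of the defining expansion \eqref{LahDef}. What it does not deliver is the explicit binomial form \eqref{r2.1}, which the paper extracts as a by-product of its computation and then reuses in the derivation of Theorem \ref{thm1}; in your version that form has to be recovered afterwards by appealing to \eqref{lah1}, as you note at the end.
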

\begin{proof}
Since $\left\langle j\right\rangle_n=(j+n-1)_n$, then
\begin{eqnarray*}
\widehat{w}_{(\alpha)}(n,k)&=&\frac{\alpha^{n-k}}{k!}\sum_{j=0}^k(-1)^{k-j}\binom{k}{j}(j+n-1)_n\\
&=&\alpha^{n-k}\frac{n!}{k!}\sum_{j=0}^k(-1)^{k-j}\binom{k}{j}\binom{j+n-1}{n}.
\end{eqnarray*}
From \cite[Identity 5.24]{Graham}, it is known that the binomial coefficients satisfy the following useful identity:
\begin{equation}
\sum_{j}\binom{\ell}{m+j}\binom{s+j}{n}(-1)^j=(-1)^{\ell+m}\binom{s-m}{n-\ell}.\label{graham}
\end{equation}
Hence, with $m=0$, $\ell=k$ and $s=n-1$, we obtain
\begin{equation}
\widehat{w}_{(\alpha)}(n,k)=\alpha^{n-k}\frac{n!}{k!}\binom{n-1}{n-k}.\label{r2.1}
\end{equation}
This completes the proof.
\end{proof}
\begin{corollary}
The translated Whitney-Lah numbers satisfy the following exponential generating function:
\begin{equation}
\sum_{n=k}^{\infty}\widehat{w}_{(\alpha)}(n,k)\frac{t^n}{n!}=\frac{1}{k!}\left(\frac{t}{1-\alpha t}\right)^k.\label{r3}
\end{equation}
\end{corollary}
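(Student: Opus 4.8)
The plan is to reduce the claim to the classical Lah-number generating function \eqref{lah3} by means of the relation $\widehat{w}_{(\alpha)}(n,k)=\alpha^{n-k}L(n,k)$ established in the preceding corollary. Substituting this into the left-hand side and pulling the constant factor $\alpha^{-k}$ outside the sum while absorbing the remaining powers of $\alpha$ into the variable, I would obtain
$$\sum_{n=k}^{\infty}\widehat{w}_{(\alpha)}(n,k)\frac{t^n}{n!}=\alpha^{-k}\sum_{n=k}^{\infty}L(n,k)\frac{(\alpha t)^n}{n!}.$$
This is the whole idea: once the $\alpha$-dependence is isolated as a scalar prefactor and a rescaling $t\mapsto\alpha t$, the problem collapses onto the already-known identity \eqref{lah3}.

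Next I would observe that $L(n,k)=0$ whenever $n<k$, since one cannot distribute an $n$-element set into $k$ nonempty ordered lists when $n<k$; consequently the sum beginning at $n=k$ agrees with the sum beginning at $n=0$. This justifies applying \eqref{lah3} directly after the substitution $t\mapsto\alpha t$, which yields
$$\alpha^{-k}\sum_{n=0}^{\infty}L(n,k)\frac{(\alpha t)^n}{n!}=\alpha^{-k}\cdot\frac{1}{k!}\left(\frac{\alpha t}{1-\alpha t}\right)^k.$$
Finally, the factor $\alpha^{k}$ coming from $(\alpha t)^k$ in the numerator cancels the prefactor $\alpha^{-k}$, leaving exactly $\frac{1}{k!}\left(\frac{t}{1-\alpha t}\right)^k$, which is the desired expression.

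There is no genuine obstacle here; the argument is essentially a change of variable in a known generating function. The only points requiring a little care are the index realignment, justified by the vanishing of $L(n,k)$ for $n<k$, and the bookkeeping of the powers of $\alpha$ under the rescaling $t\mapsto\alpha t$, so that the $\alpha^{-k}$ from $\widehat{w}_{(\alpha)}(n,k)=\alpha^{n-k}L(n,k)$ and the $\alpha^{k}$ produced inside the $k$-th power cancel cleanly. One could alternatively derive \eqref{r3} directly from the explicit formula \eqref{r1}, but that route is more laborious and the corollary-based reduction is far cleaner.
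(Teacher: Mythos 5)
Your proof is correct, but it takes a genuinely different route from the paper's. The paper proves \eqref{r3} directly from the explicit formula \eqref{r1}: it writes $\langle j\rangle_n$ as $n!\binom{j+n-1}{n}$, sums the negative binomial series $\sum_n\binom{j+n-1}{n}(\alpha t)^n=(1-\alpha t)^{-j}$, and then recognizes the outer sum over $j$ as the binomial expansion of $\frac{1}{k!}\left[(1-\alpha t)^{-1}-1\right]^k$. You instead invoke the preceding corollary $\widehat{w}_{(\alpha)}(n,k)=\alpha^{n-k}L(n,k)$ and reduce everything to the classical generating function \eqref{lah3} by the rescaling $t\mapsto\alpha t$; your bookkeeping of the powers of $\alpha$ and the remark that $L(n,k)=0$ for $n<k$ (so the lower limit of summation is immaterial) are both correct, and since \eqref{r2} appears before \eqref{r3} in the paper there is no circularity. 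What each approach buys: yours is shorter and makes transparent that \eqref{r3} is nothing more than \eqref{lah3} in disguise, but it presupposes \eqref{lah3} as known; the paper's computation is self-contained and, upon setting $\alpha=1$, furnishes an independent derivation of \eqref{lah3} itself. The paper's route also quietly glosses over the fact that the inner series is summed from $n=k$ rather than $n=0$ (harmless, since the $k$-th finite difference in $j$ annihilates the low-degree terms), a point your argument sidesteps cleanly via the vanishing of $L(n,k)$ for $n<k$.
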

\begin{proof}
Applying \eqref{r1}, and the binomial and negative binomial expansions,
\begin{eqnarray*}
\sum_{n=k}^{\infty}\widehat{w}_{(\alpha)}(n,k)\frac{t^n}{n!}&=&\frac{1}{\alpha^kk!}\sum_{j=0}^k(-1)^{k-j}\binom{k}{j}\sum_{n=k}^{\infty}(\alpha t)^n\binom{j+n-1}{n}\\
&=&\frac{1}{\alpha^kk!}\sum_{j=0}^k(-1)^{k-j}\binom{k}{j}(1-\alpha t)^{-j}\\
&=&\frac{1}{\alpha^k}\left[(1-\alpha t)^{-1}-1\right]^k\\
&=&\frac{1}{k!}\left(\frac{t}{1-\alpha t}\right)^k.
\end{eqnarray*}
\end{proof}
Clearly, the results shown in the previous corollaries generalize identities \eqref{lah1} and \eqref{lah3} for the classical Lah numbers when $\alpha=1$. The binomial identity in \eqref{graham} can also be utilized to derive another interesting formula for the translated Whitney-Lah numbers. By setting $s=n$, $\ell=k-1$ and $m=-1$,
\begin{equation*}
\sum_{j=1}^k\binom{k-1}{j-1}\binom{n+j}{n}(-1)^j=(-1)^{k-2}\binom{n+1}{n-k+1}.
\end{equation*}
Multiplying both sides by $k!$ gives
\begin{equation*}
\sum_{j=1}^k\binom{k-1}{j-1}\binom{n+j}{n}(-1)^j=\sum_{j=1}^k\widehat{w}_{(\alpha)}(k,j)\frac{(n+j)!(-1)^j}{n!\alpha^{k-j}}
\end{equation*}
in the left-hand side after using \eqref{r2.1}. On the other hand, the right-hand side simply becomes
\begin{equation*}
(-1)^{k-2}\binom{n+1}{n-k+1}=(-1)^k\frac{(n+1)!}{(n-k+1)!}.
\end{equation*}
Thus, we have derived the following theorem:
\begin{theorem}\label{thm1}
For $k\geq2$ and $n\geq k-1$, the translated Whitney-Lah numbers satisfy
\begin{equation}
\sum_{j=1}^k(-\alpha)^j\widehat{w}_{(\alpha)}(k,j)(n+j)!=(-\alpha)^k\frac{n!(n+1)!}{(n-k+1)!}.\label{r4}
\end{equation}
\end{theorem}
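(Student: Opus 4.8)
The plan is to reduce \eqref{r4} to a pure binomial-coefficient identity by invoking the closed form \eqref{r2.1} for $\widehat{w}_{(\alpha)}(k,j)$, and then to recognize the resulting alternating sum as a single instance of the Vandermonde-type identity \eqref{graham}. The guiding observation is that all of the $\alpha$-dependence in \eqref{r4} should concentrate into one global factor $\alpha^k$, after which only a sign-weighted sum of products of two binomial coefficients remains.

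First I would substitute \eqref{r2.1} into the left-hand side. Since $\binom{k-1}{k-j}=\binom{k-1}{j-1}$, this reads $\widehat{w}_{(\alpha)}(k,j)=\alpha^{k-j}\tfrac{k!}{j!}\binom{k-1}{j-1}$, so each summand acquires the factor $(-\alpha)^j\alpha^{k-j}=(-1)^j\alpha^k$. The powers of $\alpha$ therefore collapse to a single $\alpha^k$ that pulls out of the sum; this is exactly what forces the exponent $k$ on the right-hand side, independent of the summation index, and it is the one genuinely $\alpha$-specific step. What remains is equivalent to an identity involving no $\alpha$ at all.

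Second I would turn every factorial into a binomial coefficient so the summand matches the template of \eqref{graham}. Writing $\tfrac{(n+j)!}{j!}=n!\binom{n+j}{n}$, the left-hand side becomes $\alpha^k\,k!\,n!\sum_{j=1}^{k}(-1)^j\binom{k-1}{j-1}\binom{n+j}{n}$. The inner sum is precisely the left-hand side of \eqref{graham} under the choices $\ell=k-1$, $m=-1$, $s=n$, for which $\binom{\ell}{m+j}=\binom{k-1}{j-1}$ and $\binom{s+j}{n}=\binom{n+j}{n}$, and the effective summation range $1\le j\le k$ is the one where both binomials are nonzero. Applying \eqref{graham} collapses it to $(-1)^{\ell+m}\binom{s-m}{n-\ell}=(-1)^{k-2}\binom{n+1}{n-k+1}=(-1)^k\binom{n+1}{n-k+1}$.

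Finally I would re-expand the surviving binomial coefficient as $\binom{n+1}{n-k+1}=\tfrac{(n+1)!}{k!\,(n-k+1)!}$ and reinstate the constants $\alpha^k$, $k!$ and $n!$; the $k!$ cancels and $\alpha^k(-1)^k=(-\alpha)^k$ assembles the claimed right-hand side $(-\alpha)^k\tfrac{n!(n+1)!}{(n-k+1)!}$. The main obstacle is bookkeeping rather than conceptual: one must align the three free parameters of \eqref{graham} so that both binomials and the summation range match simultaneously, and then correctly track the alternating sign $(-1)^{\ell+m}$ together with the accumulated factorial constants. I would close by noting that the hypotheses $k\ge2$ and $n\ge k-1$ are exactly what keep $\binom{k-1}{j-1}$ and $\binom{n+1}{n-k+1}$ in their standard nonnegative range, so that no boundary term is gained or lost in passing to \eqref{graham}.
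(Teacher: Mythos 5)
Your proposal is correct and follows essentially the same route as the paper's own derivation, which also substitutes the closed form \eqref{r2.1} and applies \eqref{graham} with $s=n$, $\ell=k-1$, $m=-1$ to collapse the alternating sum. (The paper additionally records a second, alternative proof via the generating function \eqref{twlHGF} evaluated at $t=-n-1$, but its primary argument is the one you give.)
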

When $\alpha=1$, we immediately recognize
\begin{equation}
\sum_{j=1}^k(-1)^jL(k,j)(n+j)!=(-1)^k\frac{n!(n+1)!}{(n-k+1)!},\label{gouqi}
\end{equation}
an identity for the classical Lah numbers which was proved using six different methods by Guo and Qi \cite{Guo}. A more direct approach in establishing \eqref{r4} is as follows.
\begin{proof}[Alternative proof of Theorem \ref{thm1}]
The generating function in \eqref{twlHGF} may be rewritten as
\begin{equation}
(-\alpha)^k(-t)_k=\sum_{j=0}^k\alpha^k\widehat{w}_{(\alpha)}(k,j)(t)_j.
\end{equation}
Since $(-n-1)_jn!=(-1)^j(n+j)!$, then replacing $t$ with $-n-1$ in the previous equation gives
\begin{equation*}
(-\alpha)^kn!(n+1)_k=\sum_{j=0}^k(-\alpha)^j\widehat{w}_{(\alpha)}(k,j)(n+j)!
\end{equation*}
as desired.
\end{proof}

Now, to derive a generalization of \eqref{QiF}, there are two known methods presented in the paper of Qi \cite{Qi} to choose from. The first one employs the Faa di Bruno's formula and the $n$-th derivative of the exponential function $e^{\pm 1/x}$ given by
\begin{equation*}
\left(e^{\pm 1/x}\right)^{(n)}=(-1)^ne^{\pm 1/x}\sum_{k=1}^n(\pm 1)^kL(n,k)\frac{1}{t^{n+k}}
\end{equation*}
found in the paper of Daboud et al. \cite{Dab}. The second is less complicated and requires only the use of the inverse relation
\begin{equation}
f_n=\sum_{j=0}^n\stirlingf{n}{j}g_j\Longleftrightarrow g_n=\sum_{j=0}^n(-1)^{n-j}\stirlings{n}{j}f_j.
\end{equation}
To obtain the next objective, we choose a process similar to the latter since by using the orthogonal relations \cite[Corollary 4.2]{Mangontarum1}
\begin{equation*}
\sum_{j=m}^n(-1)^{j-m}\widetilde{W}_{(\alpha)}(n,j)\widetilde{w}_{(\alpha)}(j,m)=\sum_{j=m}^n(-1)^{n-j}\widetilde{w}_{(\alpha)}(n,j)\widetilde{W}_{(\alpha)}(j,m)=\delta_{m,n},
\end{equation*}
it can be easily shown that the following inverse relation is valid:
\begin{equation}
f_n=\sum_{j=0}^n\widetilde{w}_{(\alpha)}(n,j)g_j\Longleftrightarrow g_n=\sum_{j=0}^n(-1)^{n-j}\widetilde{w}_{(\alpha)}(n,j)f_j.
\end{equation}
Next, we rewrite \eqref{wlahwW} as
\begin{equation}
\widehat{w}_{(\alpha)}(n,k)=\sum_{j=0}^n\widetilde{w}_{(\alpha)}(n,j)\widetilde{W}_{(\alpha)}(j,k),\label{wlahwW2}
\end{equation}
and take $g_j=\widetilde{W}_{(\alpha)}(j,k)$ and $f_n=\widehat{w}_{(\alpha)}(n,k)$ so that when the above inverse relation is applied, we get
\begin{equation}
\widetilde{W}_{(\alpha)}(n,k)=\sum_{j=0}^n(-1)^{n-j}\widetilde{W}_{(\alpha)}(n,j)\widehat{w}_{(\alpha)}(j,k).\label{wlahwW3}
\end{equation}
We then recall that the translated Dowling numbers \cite{Mangontarum2}, denoted by $D_{(\alpha)}(n)$, are defined as the sum of the translated Whitney numbers of the second kind, i.e.
\begin{equation}
D_{(\alpha)}(n)=\sum_{k=0}^n\widetilde{W}_{(\alpha)}(n,k),\label{translatedDN}
\end{equation}
and are known to satisfy the explicit formula \cite[Equation 26]{Mangontarum2}
\begin{equation*}
D_{(\alpha)}(n)=\left(\frac{1}{e}\right)^{1/\alpha}\sum_{i=0}^{\infty}\frac{(i\alpha)^n}{i!\alpha^i}.
\end{equation*}
Summing both sides of \eqref{wlahwW3} up to $n$ and appyling \eqref{translatedDN} gives
\begin{equation*}
D_{(\alpha)}(n)=\sum_{k=0}^n\sum_{j=0}^n(-1)^{n-j}\widetilde{W}_{(\alpha)}(n,j)\widehat{w}_{(\alpha)}(j,k).
\end{equation*}
Thus, we have proved the result in the next theorem.
\begin{theorem}
The translated Dowling numbers satisfy the explicit formula given by
\begin{equation}
D_{(\alpha)}(n)=\sum_{j=0}^n(-1)^{n-j}\left(\sum_{k=0}^j\widehat{w}_{(\alpha)}(j,k)\right)\widetilde{W}_{(\alpha)}(n,j).\label{GQiF1}
\end{equation}
\end{theorem}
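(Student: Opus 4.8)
The plan is to obtain \eqref{GQiF1} by summing the inverse expansion \eqref{wlahwW3} over its free lower index and recognizing the resulting left-hand side as a translated Dowling number. The equation \eqref{wlahwW3} already expresses each translated Whitney number of the second kind $\widetilde{W}_{(\alpha)}(n,k)$ as a signed combination of the translated Whitney-Lah numbers $\widehat{w}_{(\alpha)}(j,k)$, and it is precisely the quantity $\widetilde{W}_{(\alpha)}(n,k)$ that is summed in the definition \eqref{translatedDN} of $D_{(\alpha)}(n)$. Thus, once \eqref{wlahwW3} is in hand, the whole argument reduces to a single summation together with an interchange of the order of summation.

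First I would secure \eqref{wlahwW3} itself. Reading the factorization \eqref{wlahwW2} as $f_n=\sum_j\widetilde{w}_{(\alpha)}(n,j)g_j$ with $f_n=\widehat{w}_{(\alpha)}(n,k)$ and $g_j=\widetilde{W}_{(\alpha)}(j,k)$ for a fixed $k$, I would apply the inverse relation supplied by the orthogonality relations of Mangontarum, which invert the lower-triangular array $[\widetilde{w}_{(\alpha)}(n,j)]$ by $[(-1)^{n-j}\widetilde{W}_{(\alpha)}(n,j)]$. This returns $g_n=\widetilde{W}_{(\alpha)}(n,k)=\sum_j(-1)^{n-j}\widetilde{W}_{(\alpha)}(n,j)\widehat{w}_{(\alpha)}(j,k)$, which is exactly \eqref{wlahwW3}. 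Keeping the two kinds of translated Whitney numbers and the sign $(-1)^{n-j}$ straight is the one place where care is genuinely needed, and I regard this inversion as the main obstacle of the proof.

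Next I would sum \eqref{wlahwW3} over $k=0,\dots,n$. By \eqref{translatedDN} the left-hand side collapses to $D_{(\alpha)}(n)$, giving
\begin{equation*}
D_{(\alpha)}(n)=\sum_{k=0}^n\sum_{j=0}^n(-1)^{n-j}\widetilde{W}_{(\alpha)}(n,j)\widehat{w}_{(\alpha)}(j,k).
\end{equation*}
Both indices range over the finite set $\{0,\dots,n\}$, so interchanging the two sums is unconditionally justified, and the factor $(-1)^{n-j}\widetilde{W}_{(\alpha)}(n,j)$ pulls outside the inner $k$-sum. Finally I would note that $\widehat{w}_{(\alpha)}(j,k)=0$ whenever $k>j$ — one cannot distribute $j$ objects into more than $j$ nonempty lists, equivalently $L(j,k)=0$ for $k>j$ in \eqref{r2} — so the inner sum may be truncated at $k=j$, producing $\sum_{k=0}^j\widehat{w}_{(\alpha)}(j,k)$. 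Collecting terms then yields \eqref{GQiF1}. Apart from the inversion in the first step, the argument is a purely finite re-summation with no analytic subtlety.
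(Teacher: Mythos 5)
Your proposal is correct and follows essentially the same route as the paper: invert the factorization \eqref{wlahwW2} using the orthogonality of $\widetilde{w}_{(\alpha)}$ and $\widetilde{W}_{(\alpha)}$ to obtain \eqref{wlahwW3}, then sum over $k$ and invoke \eqref{translatedDN}. Your explicit justification of truncating the inner sum at $k=j$ (via $\widehat{w}_{(\alpha)}(j,k)=0$ for $k>j$) is a small point the paper passes over silently, and you correctly use $\widetilde{W}_{(\alpha)}$ in the inverting array where the paper's displayed inverse relation contains a typographical $\widetilde{w}_{(\alpha)}$.
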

To close this section, notice that by \eqref{r2}, we may write
\begin{equation*}
D_{(\alpha)}(n)=\sum_{j=0}^n(-1)^{n-j}\left(\sum_{k=0}^j\alpha^{j-k}L(j,k)\right)\widetilde{W}_{(\alpha)}(n,j).
\end{equation*}
Since it is known that \cite{Mangontarum1,Mangontarum2} $\widetilde{W}_{(1)}(n,j)=\stirlings{n}{j}$ and $D_{(1)}(n)=B_n$, it shows that the formula in \eqref{GQiF1} generalizes Qi's formula in \eqref{QiF} when $\alpha=1$.

\section{Translated $q$-Whitney-Lah numbers}\label{sec2}

The translated $q$-Whitney numbers of the first and second kind \cite{Mah4}, denoted by $w^1_{(\alpha)}[n,k]_q$ and $w^2_{(\alpha)}[n,k]_q$, respectively, are defined in terms of the following horizontal generating functions:
\begin{equation}
[t|\alpha]_n=\sum_{k=0}^nw^1_{(\alpha)}[n,k]_q[t]_q^k\label{def1}
\end{equation}
and
\begin{equation}
[t]_q^n=\sum_{k=0}^nw^2_{(\alpha)}[n,k]_q[t|\alpha]_k,\label{def2}
\end{equation}
where 
\begin{equation*}
[t|\alpha]_n=\prod_{i=0}^{n-1}[t-i\alpha]_q.
\end{equation*}
Here, $[n]_q$ is used to denote the $q$-analogue of the integer $n$ defined by 
\begin{equation*}
[n]_q=\frac{q^n-1}{q-1}=1+q+q^2+\cdots+q^{n-1}.
\end{equation*}
Various combinatorial properties of the numbers $w^1_{(\alpha)}[n,k]_q$ and $w^2_{(\alpha)}[n,k]_q$ and a certain combinatorial interpretation in the context of $A$-tableaux have already been established in the same paper. The properties include the inverse relation \cite[Corollary 2.10]{Mah4}
\begin{equation}
f_n=\sum_{j=0}^nw^1_{(\alpha)}[n,j]_qg_j\Longleftrightarrow g_n=\sum_{j=0}^nw^2_{(\alpha)}[n,j]_qf_j. \label{invqTW}
\end{equation}
Moreover, the said numbers have been shown to be proper $q$-analogues of the translated Whitney numbers. In general, the term ``$q$-analogue'' refers to a mathematical expression in terms of a parameter $q$ such that as $q\rightarrow 1$, it reduces to a known identity or formula. For instance, it is clear that 
\begin{equation*}
\lim_{q\rightarrow 1}[n]_q=n.
\end{equation*}
Other examples are the $q$-binomial coefficient 
\begin{equation*}
\binom{n}{k}_q=\prod_{j=1}^k\frac{q^{n-j+1}-1}{q^j-1}=\frac{[n]_q!}{[k]_q![n-k]_q!}
\end{equation*}
and the $q$-falling factorial of $n$ of order $k$
\begin{equation*}
[n]_{q,k}=\prod_{j=0}^{k-1}\frac{q^{n-j}-1}{q-1}=\frac{[n]_q!}{}[n-k]_q!,
\end{equation*}
where $[n]_q!=\prod_{i=1}^n[i]_q$ is the $q$-factorial of $n$. The following limits are easy to verify:
\begin{equation*}
\lim_{q\rightarrow 1}[n]_q!=n!,\ \ \lim_{q\rightarrow 1}\binom{n}{k}_q=\binom{n}{k},\ \ \lim_{q\rightarrow 1}[n]_{q,k}=(n)_k.
\end{equation*}
The book of Kac and Cheung \cite{Kac} is a rich source for further discussions on $q$-analogues. The study of $q$-analogues of mathematical identities has been the interest of many mathematicians over a long period of time. For the case of the Lah numbers, Garsia and Remmel \cite{Gar} defined the $q$-Lah numbers, denoted by $L_q(n,k)$, by
\begin{equation}
[t]_q[t+1]_q\cdots[t+n-1]_q=\sum_{k=0}^nL_q(n,k)[t]_q[t-1]_q\cdots[t-k+1]_q
\end{equation}
with the recurrence relation
\begin{equation}
L_q(n+1,k)=q^{n+k-1}L_q(n,k-1)+[n+k]_qL_q(n,k)
\end{equation}
and explicit formula
\begin{equation}
L_q(n,k)=\binom{n}{k}_q\frac{[n-1]_q!}{[k-1]_q!}q^{k(k-1)}.
\end{equation}
A more general notion was also introduced in \cite{Mah4}. The translated $q$-Whitney numbers of the third kind, denoted by $L_{(\alpha)}[n,k]_q$, are defined as coefficients in the expansion of \cite[Equation 15]{Mah4}
\begin{equation}
[t|-\alpha]_n=\sum_{k=0}^nL_{(\alpha)}[n,k]_q[t|\alpha]_k\label{def3}
\end{equation}
which can be computed recursively using the formula \cite[Equation 31]{Mah4}
\begin{equation}
L_{(\alpha)}[n+1,k]_q=q^{\alpha(n+k-1)}L_{(\alpha)}[n,k-1]_q+[\alpha(n+k)]_qL_{(\alpha)}[n,k]_q.
\end{equation}
It is easy to see that $L_{(1)}[n,k]_q=L_q(n,k)$.

\begin{theorem}
The numbers $L_{(\alpha)}[n,k]_q$ satisfy the following:
\begin{equation}
L_{(\alpha)}[n,k]_q=\sum_{j=0}^nw^1_{(-\alpha)}[n,j]_qw^2_{(\alpha)}[j,k]_q.\label{qw1w2}
\end{equation}
\end{theorem}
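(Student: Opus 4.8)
The plan is to derive \eqref{qw1w2} by composing the two horizontal generating functions \eqref{def1} and \eqref{def2}, in exact parallel with the way identity \eqref{lahsS} was obtained in the Introduction by chaining \eqref{sn1.1} with \eqref{sn2}. The whole argument amounts to reading off the coefficients of a single change-of-basis computation, so no new machinery beyond the defining relations is needed. (Note that the inverse relation \eqref{invqTW} is \emph{not} the right tool here, since it pairs $w^1_{(\alpha)}$ with $w^2_{(\alpha)}$ for the \emph{same} increment, whereas \eqref{qw1w2} mixes the increments $-\alpha$ and $\alpha$.)

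First I would apply the defining relation \eqref{def1} with the increment $\alpha$ replaced by $-\alpha$, which expands the generalized $q$-factorial $[t|-\alpha]_n$ in the power basis:
\[
[t|-\alpha]_n=\sum_{j=0}^nw^1_{(-\alpha)}[n,j]_q[t]_q^j.
\]
Next, for each $j$ I would substitute the second horizontal generating function \eqref{def2}, which re-expands every power $[t]_q^j$ in terms of the factorials $[t|\alpha]_k$, and then interchange the two finite sums to collect the coefficient of each $[t|\alpha]_k$:
\[
[t|-\alpha]_n=\sum_{j=0}^nw^1_{(-\alpha)}[n,j]_q\sum_{k=0}^jw^2_{(\alpha)}[j,k]_q[t|\alpha]_k
=\sum_{k=0}^n\left(\sum_{j=k}^nw^1_{(-\alpha)}[n,j]_qw^2_{(\alpha)}[j,k]_q\right)[t|\alpha]_k.
\]

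Finally I would compare this with the defining expansion \eqref{def3}, namely $[t|-\alpha]_n=\sum_{k=0}^nL_{(\alpha)}[n,k]_q[t|\alpha]_k$. Matching the coefficient of each $[t|\alpha]_k$ yields $L_{(\alpha)}[n,k]_q=\sum_{j=k}^nw^1_{(-\alpha)}[n,j]_qw^2_{(\alpha)}[j,k]_q$; since $w^2_{(\alpha)}[j,k]_q=0$ whenever $j<k$, the lower limit extends harmlessly to $j=0$, giving the stated form.

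The computation itself is routine; the only point that genuinely requires care is the legitimacy of comparing coefficients, i.e.\ the linear independence of the generalized $q$-factorials $\{[t|\alpha]_k\}_{k=0}^n$. I expect this to be the main (though minor) obstacle. It can be settled by observing that each $[t|\alpha]_k$ has $q$-degree $k$, so they form a triangular, hence invertible, transition relative to the power basis $\{[t]_q^k\}$; equivalently, relation \eqref{def2} already exhibits that the transition matrix between the two bases is invertible, so the expansion in \eqref{def3} is unique and coefficientwise comparison is justified.
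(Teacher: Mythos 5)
Your proposal is correct and follows essentially the same route as the paper: substitute $-\alpha$ into \eqref{def1}, expand each $[t]_q^j$ via \eqref{def2}, interchange the sums, and compare coefficients of $[t|\alpha]_k$ against \eqref{def3}. Your added remarks on the vanishing of $w^2_{(\alpha)}[j,k]_q$ for $j<k$ and on the legitimacy of coefficient comparison are sound refinements that the paper leaves implicit.
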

\begin{proof}
Putting $-\alpha$ in place of $\alpha$ in \eqref{def1} and by applying \eqref{def2},
\begin{eqnarray*}
[t|-\alpha]_n&=&\sum_{k=0}^nw^1_{(-\alpha)}[n,k]_q[t]_q^k\\
&=&\sum_{j=0}^n\left\{\sum_{k=j}^nw^1_{(-\alpha)}[n,k]_qw^2_{(\alpha)}[k,j]_q\right\}[t|\alpha]_j.
\end{eqnarray*}
By comparing the coefficients of $[t|\alpha]_j$ in the last equation with that of \eqref{def3}, we get the desired result.
\end{proof}
The identity in the previous theorem suggests that the numbers $L_{(\alpha)}[n,k]_q$ may be referred to as the translated $q$-Whitney-Lah numbers. To establish an explicit formula, we will use a method different from the one used in the previous section. We start by rewriting \eqref{def3} into the form
\begin{eqnarray*}
[\alpha k|-\alpha]_n&=&\sum_{j=0}^nL_{(\alpha)}[n,j]_q[\alpha k|\alpha]_j\\
&=&\sum_{j=0}^k\binom{k}{j}_{q^{\alpha}}\left\{\frac{L_{(\alpha)}[n,j]_q[\alpha k|\alpha]_j}{\binom{k}{j}_{q^{\alpha}}}\right\}.
\end{eqnarray*}
Since the well-known $q$-binomial inversion formula can be expressed as
\begin{equation}
f_k=\sum_{j=0}^k\binom{k}{j}_{q^{\alpha}}g_j\Longleftrightarrow g_k=\sum_{j=0}^k(-1)^{k-j}q^{\alpha\binom{k-j}{2}}\binom{k}{j}_{q^{\alpha}}f_j,
\end{equation}
then with $f_k=[\alpha k|-\alpha]_q$ and $g_j=\frac{L_{(\alpha)}[n,j]_q[\alpha k|\alpha]_j}{\binom{k}{j}_{q^{\alpha}}}$, we get
\begin{equation*}
[\alpha k|\alpha]_kL_{(\alpha)}[n,k]_q=\sum_{j=0}^k(-1)^{k-j}q^{\alpha\binom{k-j}{2}}\binom{k}{j}_{q^{\alpha}}[\alpha j|-\alpha]_n,
\end{equation*}
the result in the next theorem.
\begin{theorem}
The translated $q$-Whitney-Lah numbers satisfy the following explicit formula:
\begin{equation}
L_{(\alpha)}[n,k]_q=\frac{1}{[k]_{q^{\alpha}}![\alpha]_q^k}\sum_{j=0}^k(-1)^{k-j}q^{\alpha\binom{k-j}{2}}\binom{k}{j}_{q^{\alpha}}[\alpha j|-\alpha]_n.\label{qr1}
\end{equation}
\end{theorem}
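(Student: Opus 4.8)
The plan is to evaluate the defining relation \eqref{def3} at the special arguments $t=\alpha m$ for nonnegative integers $m$, convert the resulting generalized factorials into $q^{\alpha}$-binomial coefficients, and then strip off the unknown $L_{(\alpha)}[n,k]_q$ by a single application of the $q$-binomial inversion stated before the theorem. First I would set $t=\alpha m$ in \eqref{def3} to obtain $[\alpha m|-\alpha]_n=\sum_{j=0}^n L_{(\alpha)}[n,j]_q\,[\alpha m|\alpha]_j$, treating $m$ as the free index over which the inversion will act.

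The key computation is the factorization of $[\alpha m|\alpha]_j$. Using the elementary identity $[\alpha(m-i)]_q=[\alpha]_q[m-i]_{q^{\alpha}}$, which follows directly from $\frac{q^{\alpha(m-i)}-1}{q-1}=\frac{q^{\alpha}-1}{q-1}\cdot\frac{(q^{\alpha})^{m-i}-1}{q^{\alpha}-1}$, each factor in $[\alpha m|\alpha]_j=\prod_{i=0}^{j-1}[\alpha(m-i)]_q$ splits off a copy of $[\alpha]_q$, giving $[\alpha m|\alpha]_j=[\alpha]_q^j[j]_{q^{\alpha}}!\binom{m}{j}_{q^{\alpha}}$. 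I would also note that $[\alpha m|\alpha]_j=0$ whenever $j>m$, since the factor with $i=m$ is $[0]_q=0$, so the sum truncates at $j=m$. These two facts recast the relation as $f_m=\sum_{j=0}^m\binom{m}{j}_{q^{\alpha}}g_j$ with $f_m:=[\alpha m|-\alpha]_n$ and $g_j:=L_{(\alpha)}[n,j]_q[\alpha]_q^j[j]_{q^{\alpha}}!$.

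The crucial point to verify is that $g_j$ is genuinely independent of the running index $m$; this is exactly what the factorization guarantees, since $[\alpha m|\alpha]_j/\binom{m}{j}_{q^{\alpha}}=[\alpha]_q^j[j]_{q^{\alpha}}!$ carries no $m$. With the hypothesis of the $q$-binomial inversion (to base $q^{\alpha}$) thus satisfied, inverting yields $g_m=\sum_{j=0}^m(-1)^{m-j}q^{\alpha\binom{m-j}{2}}\binom{m}{j}_{q^{\alpha}}f_j$, that is, $L_{(\alpha)}[n,m]_q[\alpha]_q^m[m]_{q^{\alpha}}!=\sum_{j=0}^m(-1)^{m-j}q^{\alpha\binom{m-j}{2}}\binom{m}{j}_{q^{\alpha}}[\alpha j|-\alpha]_n$. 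Dividing by $[\alpha]_q^m[m]_{q^{\alpha}}!$ and renaming $m$ as $k$ gives \eqref{qr1}. The hard part is purely the factorization step: once one sees that $[\alpha m|\alpha]_j$ is an $m$-free constant multiple of $\binom{m}{j}_{q^{\alpha}}$, the inversion is immediate and everything else is bookkeeping. As a sanity check, letting $q\to 1$ sends $[\alpha]_q^k\to\alpha^k$, $[k]_{q^{\alpha}}!\to k!$, $\binom{k}{j}_{q^{\alpha}}\to\binom{k}{j}$, and $[\alpha j|-\alpha]_n\to\alpha^n\left\langle j\right\rangle_n$, so \eqref{qr1} collapses to \eqref{r1}, confirming internal consistency.
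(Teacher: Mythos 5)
Your proof is correct and follows essentially the same route as the paper: specialize \eqref{def3} at $t=\alpha m$, recognize the resulting sum as a $q^{\alpha}$-binomial transform, and apply the $q$-binomial inversion formula. You are in fact more careful than the paper, which takes $g_j=L_{(\alpha)}[n,j]_q[\alpha k|\alpha]_j/\binom{k}{j}_{q^{\alpha}}$ without explicitly checking that this quantity is independent of the inversion index $k$ --- the factorization $[\alpha m|\alpha]_j=[\alpha]_q^j[j]_{q^{\alpha}}!\binom{m}{j}_{q^{\alpha}}$ you supply is exactly the justification the paper leaves implicit.
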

Formula \eqref{qr1} is a $q$-analogue of the explicit formula in \eqref{r1} since
\begin{equation*}
\lim_{q\rightarrow1}[k]_{q^{\alpha}}!=k!,\ \ \lim_{q\rightarrow1}[\alpha j|\alpha]_n=\alpha^n\left\langle j\right\rangle_n
\end{equation*}
and
\begin{eqnarray*}
\lim_{q\rightarrow1}L_{(\alpha)}[n,k]_q&=&\lim_{q\rightarrow1}\left(\frac{1}{[k]_{q^{\alpha}}![\alpha]_q^k}\sum_{j=0}^k(-1)^{k-j}q^{\alpha\binom{k-j}{2}}\binom{k}{j}_{q^{\alpha}}[\alpha j|-\alpha]_n\right)\\
&=&\frac{\alpha^{n-k}}{k!}\sum_{j=0}^k(-1)^{k-j}\binom{k}{j}\left\langle j\right\rangle_n.
\end{eqnarray*}
Furthermore, we may use the above explicit formula in establishing a kind of exponential generating function for the numbers $L_{(\alpha)}[n,k]_q$. But before proceeding, we first mention the following important identities:
\begin{equation}
[\alpha j|-\alpha]_n=[\alpha]_q^n[j+n-1]_{q^{\alpha},n},\ \ \frac{[j+n-1]_{q^{\alpha},n}}{[n]_{q^{\alpha}}!}=\binom{j+n-1}{n}_{q^{\alpha}}\label{pe1}
\end{equation}
and
\begin{equation}
\prod_{k=0}^{n-1}\frac{1}{1-q^kt}=\sum_{k=0}^{\infty}\binom{n+k-1}{k}_qt^k.\label{pe2}
\end{equation}
\begin{corollary}
The translated $q$-Whitney-Lah numbers satisfy the following exponential generating function:
\begin{equation}
\sum_{n=0}^{\infty}L_{(\alpha)}[n,k]_q\frac{t^n}{[n]_{q^{\alpha}}!}=\frac{1}{[k]_{q^{\alpha}}![\alpha]_q^k}\sum_{j=0}^k(-1)^{k-j}q^{\alpha\binom{k-j}{2}}\binom{k}{j}_{q^{\alpha}}\prod_{n=0}^{j-1}(1-q^{\alpha n}[\alpha]_qt)^{-1}.\label{qr1.1}
\end{equation}
\end{corollary}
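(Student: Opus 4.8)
The plan is to imitate the classical computation that produced \eqref{r3}, but now carrying the $q$-parameters through carefully. I would start from the explicit formula \eqref{qr1}, multiply both sides by $t^n/[n]_{q^{\alpha}}!$, and sum over $n\geq 0$. Since the prefactor $\frac{1}{[k]_{q^{\alpha}}![\alpha]_q^k}$ and the inner $j$-sum run over the \emph{finite} range $0\leq j\leq k$, I can freely interchange the order of summation, pulling the $\sum_{j=0}^k(-1)^{k-j}q^{\alpha\binom{k-j}{2}}\binom{k}{j}_{q^{\alpha}}$ outside. This reduces the whole problem to evaluating, for each fixed $j$, the single inner sum $\sum_{n=0}^{\infty}[\alpha j|-\alpha]_n\frac{t^n}{[n]_{q^{\alpha}}!}$.

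The key step is to recognize this inner sum as a $q$-binomial series. First I would invoke the left-hand identity of \eqref{pe1} to write $[\alpha j|-\alpha]_n=[\alpha]_q^n[j+n-1]_{q^{\alpha},n}$, and then the right-hand identity of \eqref{pe1} to convert the quotient $[j+n-1]_{q^{\alpha},n}/[n]_{q^{\alpha}}!$ into $\binom{j+n-1}{n}_{q^{\alpha}}$. Absorbing the factor $[\alpha]_q^n$ into the variable then yields
\begin{equation*}
\sum_{n=0}^{\infty}[\alpha j|-\alpha]_n\frac{t^n}{[n]_{q^{\alpha}}!}=\sum_{n=0}^{\infty}\binom{j+n-1}{n}_{q^{\alpha}}\bigl([\alpha]_qt\bigr)^n.
\end{equation*}

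At this point I would apply the product formula \eqref{pe2} with the base $q$ replaced by $q^{\alpha}$, the variable $t$ replaced by $[\alpha]_qt$, and the role of the product-length parameter played by $j$; this collapses the series to $\prod_{n=0}^{j-1}(1-q^{\alpha n}[\alpha]_qt)^{-1}$, exactly the factor appearing in the claimed identity. Substituting this back into the interchanged sum and restoring the prefactor $\frac{1}{[k]_{q^{\alpha}}![\alpha]_q^k}$ reproduces \eqref{qr1.1} verbatim, which finishes the proof.

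I expect the only genuine subtlety to be the bookkeeping in the final substitution: one must match \eqref{pe2} with the correct triple of replacements ($q\mapsto q^{\alpha}$, $t\mapsto[\alpha]_qt$, and length $j$) so that the summation index of \eqref{pe2} aligns with $n$ and the product index ranges over $0,\dots,j-1$. The convergence/interchange issues are purely formal—everything is manipulated as a formal power series in $t$—so the ``hard part'' is really just ensuring the $q$-index substitutions are internally consistent, after which the identity falls out mechanically, in direct parallel with the $\alpha$-only computation that established \eqref{r3}.
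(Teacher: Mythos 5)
Your proposal is correct and matches the paper's own proof essentially line for line: both start from the explicit formula \eqref{qr1}, interchange the finite $j$-sum with the series in $n$, rewrite $[\alpha j|-\alpha]_n$ via \eqref{pe1} as $[\alpha]_q^n\binom{j+n-1}{n}_{q^{\alpha}}[n]_{q^{\alpha}}!$, and then collapse the inner series using \eqref{pe2} with $q\mapsto q^{\alpha}$, $t\mapsto[\alpha]_qt$, and product length $j$. No substantive differences to report.
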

\begin{proof}
From \eqref{qr1} and \eqref{pe1},
\begin{equation*}
\sum_{n=0}^{\infty}L_{(\alpha)}[n,k]_q\frac{t^n}{[n]_{q^{\alpha}}!}=\frac{1}{[k]_{q^{\alpha}}![\alpha]_q^k}\sum_{j=0}^k(-1)^{k-j}q^{\alpha\binom{k-j}{2}}\binom{k}{j}_{q^{\alpha}}\sum_{n=0}^{\infty}\binom{j+n-1}{n}_{q^{\alpha}}([\alpha]_qt)^n.
\end{equation*}
The result is obtained by applying \eqref{pe2} in the second summation.
\end{proof}
By taking the limit of \eqref{qr1.1} as $q\rightarrow1$,
\begin{equation*}
\lim_{q\rightarrow1}\sum_{n=0}^{\infty}L_{(\alpha)}[n,k]_q\frac{t^n}{[n]_{q^{\alpha}}!}=\frac{1}{\alpha^kk!}\sum_{j=0}^k(-1)^{k-j}\binom{k}{j}\left(\frac{1}{1-\alpha t}\right)^j
\end{equation*}
which in turn simplifies to \eqref{r3}. On the other hand, the next theorem contains a $q$-analogue of \eqref{r4}.
\begin{theorem}
The translated $q$-Whitney-Lah numbers satisfy the following:
\begin{equation}
\sum_{j=0}^k(-[\alpha]_q)^jq^{-nj-\binom{j+1}{2}}L_{(\alpha)}[k,j]_q[n+j]_{q^{\alpha}}!=\frac{(-[\alpha]_q)^k[n]_{q^{\alpha}}![n+1]_{q^{\alpha}}!}{[n-k+1]_{q^{\alpha}}!}.\label{qr2}
\end{equation}
\end{theorem}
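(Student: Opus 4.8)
The plan is to imitate the alternative proof of Theorem \ref{thm1} in the $q$-setting, taking the horizontal generating function \eqref{def3} as the starting point and evaluating it at the $q$-analogue of the classical substitution $t\mapsto -n-1$. Concretely, I would write \eqref{def3} with $n$ replaced by $k$, namely $[t|-\alpha]_k=\sum_{j=0}^kL_{(\alpha)}[k,j]_q[t|\alpha]_j$, and then set $t=-\alpha(n+1)$. This is the natural choice: each factor $t-i\alpha$ (resp. $t+i\alpha$) then becomes a \emph{negative} multiple of $\alpha$, so it reduces cleanly to a $q^{\alpha}$-integer and the right-hand side factorials $[n+j]_{q^{\alpha}}!$ of \eqref{qr2} emerge on their own. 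As in the derivation of the explicit formula earlier in this section, where the substitution $t=\alpha k$ was used, specializing $t$ in \eqref{def3} is legitimate because the relation is an identity in $[t]_q$.

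The computation rests on two elementary identities that follow at once from $[m]_q=(q^m-1)/(q-1)$: the reflection rule $[-m]_q=-q^{-m}[m]_q$ and the factorization $[\alpha m]_q=[\alpha]_q[m]_{q^{\alpha}}$ already invoked in \eqref{pe1}. Applying both to the summand, the product $[t|\alpha]_j=\prod_{i=0}^{j-1}[-\alpha(n+1+i)]_q$ splits into the sign $(-[\alpha]_q)^j$, a power of $q$ gathered from the $q^{-m}$ factors, and the rising product $[n+1]_{q^{\alpha}}\cdots[n+j]_{q^{\alpha}}=[n+j]_{q^{\alpha}}!/[n]_{q^{\alpha}}!$. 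The identical treatment of the left-hand side, $[t|-\alpha]_k=\prod_{i=0}^{k-1}[-\alpha(n+1-i)]_q$, produces the closed form on the right of \eqref{qr2}, the falling product $[n+1]_{q^{\alpha}}\cdots[n-k+2]_{q^{\alpha}}$ being recognized as $[n+1]_{q^{\alpha}}!/[n-k+1]_{q^{\alpha}}!$. Equating the two sides and clearing the common factor $[n]_{q^{\alpha}}!$ then delivers the claimed identity.

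The main obstacle will be the bookkeeping of the powers of $q$ generated by the repeated use of $[-m]_q=-q^{-m}[m]_q$. On the summand side these exponents accumulate as $\sum_{i=0}^{j-1}(n+1+i)=nj+\binom{j+1}{2}$, which is precisely what supplies the $q$-power attached to $L_{(\alpha)}[k,j]_q$ in \eqref{qr2}; on the other side the analogous sum $\sum_{i=0}^{k-1}(n+1-i)=(n+1)k-\binom{k}{2}$ must be matched against the prefactor of the closed form. Since each $m$ here is a multiple of $\alpha$, the natural base of all these powers is $q^{\alpha}$, and keeping this base straight throughout the collapse is the step that most needs care. As a final consistency check I would let $q\to1$: all $q$-exponents then disappear, $[\alpha]_q\to\alpha$, $[m]_{q^{\alpha}}!\to m!$, and $L_{(\alpha)}[k,j]_q\to\widehat{w}_{(\alpha)}(k,j)$, so that \eqref{qr2} must reduce to \eqref{r4}.
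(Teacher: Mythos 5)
Your strategy coincides with the paper's: specialize the horizontal generating function \eqref{def3} at the $q$-analogue of $t=-n-1$ (you set $t=-\alpha(n+1)$; the paper first replaces $t$ by $\alpha t$ and then puts $t=-n-1$), convert each factor via $[\alpha m]_q=[\alpha]_q[m]_{q^{\alpha}}$ and $[-m]_Q=-Q^{-m}[m]_Q$, and multiply through by $[n]_{q^{\alpha}}!$. However, the one step you defer --- matching the exponent $\sum_{i=0}^{k-1}(n+1-i)=(n+1)k-\binom{k}{2}$ accumulated on the left against ``the prefactor of the closed form'' --- is precisely where the computation does not close as stated. Carried out exactly, your substitution produces the summand factor $(q^{\alpha})^{-nj-\binom{j+1}{2}}$ (base $q^{\alpha}$, not $q$) and leaves a residual factor $(q^{\alpha})^{-(n+1)k+\binom{k}{2}}$ on the closed-form side, whereas the right-hand side of \eqref{qr2} carries no power of $q$ at all; for instance, at $\alpha=1$, $k=n=1$ the left side of \eqref{qr2} equals $-q^{-2}[2]_q$ while the right side equals $-[2]_q$. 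The paper's own proof conceals this by rewriting $[t|-\alpha]_k$ as $[-\alpha]_q^{k}\prod_{i=0}^{k-1}[-t-i]_{q^{\alpha}}$, a step that silently discards exactly those powers of $q$. So your plan is the correct one and your worry about keeping the base of the $q$-powers straight is well founded: executed honestly, it proves a corrected form of \eqref{qr2} (with $q^{\alpha}$ in place of $q$ in the summand exponent and the extra factor $(q^{\alpha})^{-(n+1)k+\binom{k}{2}}$ on the right), not the identity as printed. Note that your proposed consistency check $q\to1$ would not detect the discrepancy, since all of these powers of $q$ tend to $1$ and both versions collapse to \eqref{r4}.
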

\begin{proof}
The proof is somewhat parallel to the alternative proof of Theorem \ref{thm1}. We proceed by rewriting \eqref{def3} as
\begin{equation}
[-\alpha]_q^k\prod_{i=0}^{k-1}[-t-i]_{q^{\alpha}}=\sum_{j=0}^k[\alpha]_q^jL_{(\alpha)}[k,j]_q\prod_{i=0}^{j-1}[t-i]_{q^{\alpha}}.
\end{equation}
We put $-n-1$ in place of $t$ and multiply both sides by $[n]_{q^{\alpha}}!$ so that the left-hand side becomes
\begin{eqnarray*}
[-\alpha]_q^k\prod_{i=0}^{k-1}[n+1-i]_{q^{\alpha}}[n]_{q^{\alpha}}!&=&[-\alpha]_q^k[n]_{q^{\alpha}}![n+1]_{q^{\alpha},k}\\
&=&\frac{[-\alpha]_q^k[n]_{q^{\alpha}}![n+1]_{q^{\alpha}}!}{[n-k+1]_{q^{\alpha}}!}
\end{eqnarray*}
while the right-hand side is
\begin{equation*}
\sum_{j=0}^k[\alpha]_q^jL_{(\alpha)}[k,j]_q[n]_{q^{\alpha}}!\prod_{i=0}^{j-1}[t-i]_{q^{\alpha}}=\sum_{j=0}^k(-[\alpha]_q)^jq^{-nj-\binom{j+1}{2}}L_{(\alpha)}[k,j]_q[n+j]_{q^{\alpha}}!,
\end{equation*}
where the identity $j(n+1)+\binom{j}{2}=nj+\binom{j+1}{2}$ is used. Combining these equations give the desired result.
\end{proof}

The corollary below is a direct consequence of \eqref{qr2} when $\alpha=1$. This formula is a $q$-analogue of Guo and Qi's \cite{Guo} identity in \eqref{gouqi} which can easily be verified by taking the limit as $q\rightarrow1$.
\begin{corollary}
The $q$-Lah numbers satisfy
\begin{equation}
\sum_{j=0}^k(-1)^jq^{-nj-\binom{j+1}{2}}L_q(k,j)[n+j]_q!=\frac{(-1)^k[n]_q![n+1]_q}{[n-k+1]_q}.\label{qr2.1}
\end{equation}
\end{corollary}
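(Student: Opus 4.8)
The plan is to obtain this corollary as the $\alpha = 1$ specialization of the preceding theorem, equation \eqref{qr2}, so that the entire argument reduces to performing that single substitution and simplifying each factor that carries an $\alpha$. Since \eqref{qr2} has already been established, no new machinery is needed; the work is purely in tracking how each $\alpha$-dependent object degenerates.

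First I would record the three elementary reductions that occur at $\alpha = 1$. Since $[1]_q = \frac{q-1}{q-1} = 1$, every occurrence of $[\alpha]_q$ collapses to $1$, so that $(-[\alpha]_q)^j = (-1)^j$ and $(-[\alpha]_q)^k = (-1)^k$. Next, because $q^{\alpha} = q$, all the $q^{\alpha}$-factorials and $q^{\alpha}$-quantities revert to ordinary $q$-analogues: $[n+j]_{q^{\alpha}}! = [n+j]_q!$, $[n]_{q^{\alpha}}! = [n]_q!$, $[n+1]_{q^{\alpha}}! = [n+1]_q!$, and $[n-k+1]_{q^{\alpha}}! = [n-k+1]_q!$. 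Finally, the identification $L_{(1)}[k,j]_q = L_q(k,j)$, already noted in the text, replaces the translated $q$-Whitney-Lah numbers by the Garsia--Remmel $q$-Lah numbers $L_q(k,j)$. I would also point out that the exponent $q^{-nj-\binom{j+1}{2}}$ contains no $\alpha$, so it passes through the specialization untouched and requires no manipulation.

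Substituting these reductions directly into \eqref{qr2} turns the left-hand side into $\sum_{j=0}^k (-1)^j q^{-nj-\binom{j+1}{2}} L_q(k,j)\,[n+j]_q!$ and the right-hand side into $(-1)^k [n]_q!\,[n+1]_q! \,/\, [n-k+1]_q!$, which is precisely the content of \eqref{qr2.1}. Since this is a pure specialization, there is essentially no obstacle; the only point demanding care is bookkeeping on the right-hand side. In the derivation of \eqref{qr2} the factor $[n+1]_{q^{\alpha},k}$ was rewritten as the ratio $[n+1]_{q^{\alpha}}!/[n-k+1]_{q^{\alpha}}!$ of full $q^{\alpha}$-factorials, so after setting $\alpha=1$ the intended right-hand side of \eqref{qr2.1} carries the complete $q$-factorials $[n+1]_q!$ and $[n-k+1]_q!$; I would be careful to retain both factorial symbols rather than dropping them.

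To close, I would supply the promised limit check as $q \to 1$. Using $\lim_{q\to1}[m]_q! = m!$ together with $\lim_{q\to1} q^{-nj-\binom{j+1}{2}} = 1$, the left-hand side of \eqref{qr2.1} tends to $\sum_{j=0}^k (-1)^j L(k,j)\,(n+j)!$ and the right-hand side tends to $(-1)^k\, n!\,(n+1)!/(n-k+1)!$, recovering Guo and Qi's identity \eqref{gouqi}. This confirms that \eqref{qr2.1} is a genuine $q$-analogue, completing the verification.
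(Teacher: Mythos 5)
Your proof is correct and matches the paper's own route exactly: the paper likewise presents \eqref{qr2.1} as the direct $\alpha=1$ specialization of \eqref{qr2}, with no further argument needed. Your bookkeeping remark is also well taken --- the displayed right-hand side $(-1)^k[n]_q![n+1]_q/[n-k+1]_q$ appears to be a typographical slip for $(-1)^k[n]_q![n+1]_q!/[n-k+1]_q!$, as both the general formula \eqref{qr2} at $\alpha=1$ and the $q\to1$ limit to \eqref{gouqi} confirm.
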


The translated $q$-Dowling numbers \cite{Mah4}, denoted by $D_{(\alpha)}[n]_q$, are defined by the following sum:
\begin{equation}
D_{(\alpha)}[n]_q=\sum_{k=0}^nw^2_{(\alpha)}[n,k]_q.
\end{equation}
The last theorem presents a $q$-analogue of the explicit formula in \eqref{GQiF1}.
\begin{theorem}
The translated $q$-Dowling numbers satisfy the following explicit formula
\begin{equation}
D_{(\alpha)}[n]_q=\sum_{j=0}^n\left(\sum_{j=0}^kL_{(\alpha)}[j,k]_q\right)w_{(-\alpha)}^2[n,j]_q.\label{qGQiF1}
\end{equation}
\end{theorem}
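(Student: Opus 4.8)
The plan is to mirror the derivation of the classical formula \eqref{GQiF1} carried out in Section~\ref{sec1}, replacing the classical inverse relation used there by its $q$-analogue \eqref{invqTW}. The starting point is the factorization \eqref{qw1w2}, which expresses the translated $q$-Whitney-Lah numbers as a convolution of $w^1_{(-\alpha)}$ and $w^2_{(\alpha)}$.

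First I would fix the index $k$ and read \eqref{qw1w2} as an instance of the forward direction of \eqref{invqTW} with the increment $\alpha$ replaced by $-\alpha$. Concretely, setting $f_n=L_{(\alpha)}[n,k]_q$ and $g_j=w^2_{(\alpha)}[j,k]_q$ turns \eqref{qw1w2} into $f_n=\sum_{j=0}^nw^1_{(-\alpha)}[n,j]_qg_j$. Invoking the inverse direction of \eqref{invqTW} (again with $\alpha\to-\alpha$) then produces
\[
w^2_{(\alpha)}[n,k]_q=\sum_{j=0}^nw^2_{(-\alpha)}[n,j]_qL_{(\alpha)}[j,k]_q,
\]
which is precisely the $q$-analogue of \eqref{wlahwW3}.

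Next I would sum this identity over $k$ from $0$ to $n$. On the left, the definition of the translated $q$-Dowling numbers yields $\sum_{k=0}^nw^2_{(\alpha)}[n,k]_q=D_{(\alpha)}[n]_q$. On the right, I would interchange the order of summation and use the fact that $L_{(\alpha)}[j,k]_q=0$ whenever $k>j$ to truncate the inner sum at $k=j$, giving $D_{(\alpha)}[n]_q=\sum_{j=0}^n\bigl(\sum_{k=0}^jL_{(\alpha)}[j,k]_q\bigr)w^2_{(-\alpha)}[n,j]_q$, which is the claimed formula \eqref{qGQiF1} (with the inner summation understood to run over the second argument $k$ up to $j$).

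The only genuinely delicate point is the parameter substitution $\alpha\to-\alpha$ when applying \eqref{invqTW}: since \eqref{qw1w2} carries the kernel $w^1_{(-\alpha)}$ rather than $w^1_{(\alpha)}$, the matching inverse kernel is $w^2_{(-\alpha)}$, and this is exactly why $w^2_{(-\alpha)}$ — and not $w^2_{(\alpha)}$ — appears in the final formula. Once the inversion is set up with the correct increment, the remaining steps are a routine summation interchange, so I expect no real obstacle.
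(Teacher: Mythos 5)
Your proposal is correct and follows essentially the same route as the paper: apply the inverse relation \eqref{invqTW} with $\alpha$ replaced by $-\alpha$ to the factorization \eqref{qw1w2} with $f_n=L_{(\alpha)}[n,k]_q$ and $g_j=w^2_{(\alpha)}[j,k]_q$, obtain $w^2_{(\alpha)}[n,k]_q=\sum_{j=0}^nw^2_{(-\alpha)}[n,j]_qL_{(\alpha)}[j,k]_q$, and sum over $k$. Your explicit justification of the summation interchange and the truncation of the inner sum at $k=j$ (and your reading of the misprinted inner index in the statement) is a welcome clarification of the paper's terser ``summing over up to $n$.''
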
 
\begin{proof}
We put $-\alpha$ in place of $\alpha$, and set $g_j=w^2_{(\alpha)}[j,k]_q$ and $f_n=L_{(\alpha)}[n,k]_q$ in the inverse relation in \eqref{invqTW} so that when the resulting relation is applied to \eqref{qw1w2},
\begin{equation*}
w^2_{(\alpha)}[n,k]_q=\sum_{j=0}^nw^2_{(-\alpha)}[n,j]_qL_{(\alpha)}[j,k]_q.
\end{equation*}
The desired result is obtained by summing over up to $n$.
\end{proof}
The explicit formula \cite[Equation 10]{Mangontarum2}
\begin{equation*}
\widetilde{W}_{(\alpha)}(n,k)=\frac{1}{\alpha^kk!}\sum_{j=0}^k(-1)^{k-j}\binom{k}{j}(\alpha j)^n
\end{equation*}
shows that $\widetilde{W}_{(-\alpha)}(n,k)=(-1)^{n-k}\widetilde{W}_{(\alpha)}(n,k)$. Hence,
\begin{eqnarray*}
\lim_{q\rightarrow1}D_{(\alpha)}[n]_q&=&\lim_{q\rightarrow1}\sum_{j=0}^n\left(\sum_{j=0}^kL_{(\alpha)}[j,k]_q\right)w_{(-\alpha)}^2[n,j]_q\\
&=&\sum_{j=0}^n(-1)^{n-j}\left(\sum_{k=0}^j\widehat{w}_{(\alpha)}(j,k)\right)\widetilde{W}_{(\alpha)}(n,j)
\end{eqnarray*}
which is precisely \eqref{GQiF1}.

As we end, it may be worthwhile to say that the present paper was not able to express the explicit formula of $L_{(\alpha)}[n,k]_q$ in a way similar to that of \eqref{r2.1} for the case of $\widehat{w}_{(\alpha)}(n,k)$. Perhaps this can be done by establishing a $q$-analogue of the binomial identity in \eqref{graham}.

\end{document}